\theoremstyle{plain}
\newtheorem{theorem}{Theorem}[section]
\newtheorem{lemma}[theorem]{Lemma}
\theoremstyle{definition}
\theoremstyle{remark}
\begin{document}


\title{A Hybrid Inverse Optimization-Stochastic Programming Framework for Network Protection}

\author{\name{Stephanie Allen\textsuperscript{a}, Daria Terekhov\textsuperscript{b}, and Steven A. Gabriel \textsuperscript{c}
}
\affil{\textsuperscript{a} University of Maryland, College Park, \url{sallen7@umd.edu} 
}
\affil{\textsuperscript{b} Concordia University, \url{daria.terekhov@concordia.ca} 
}
\affil{\textsuperscript{c} University of Maryland, College Park \& Norwegian University of Science and Technology, \url{sgabriel@umd.edu} 
}
}

\maketitle

\begin{abstract}
Disaster management is a complex problem demanding sophisticated modeling approaches.  We propose utilizing a hybrid method involving inverse optimization to parameterize the cost functions for a road network's traffic equilibrium problem and employing a modified version of \cite{fan2010solving}'s two-stage stochastic model to make protection decisions using the information gained from inverse optimization.  We demonstrate the framework using two types of cost functions for the traffic equilibrium problem and show that accurate parameterizations of cost functions can change spending on protection decisions in most of our experiments.
\end{abstract}

\begin{keywords}
inverse optimization; disaster relief; crisis management; stochastic network protection; multi-stage stochastic programs
\end{keywords}

\section{Introduction}

\noindent Given the threat of natural disasters, it is imperative that communities and nations prepare in order to mitigate the consequences.  According to NOAA \cite{noaa_centers}, in the year 2020, there were 22 ``weather and climate disasters'' that cost 1 billion or more US dollars in the United States, and 262 people died in these disasters.  Governments and planning agencies often have little foresight of the type of disaster that might strike, meaning they must be prepared for many different potential events.  To be effective, governments must make strategic decisions before and immediately after crises such that financial and human costs are minimized.  

As a response to these events, researchers have developed disaster support systems (DSS) which are defined by \cite{wallace1985decision} as systems which contain ``a data bank, a data analysis capability, normative models, and technology for display and interactive use of the data and models.'' We focus on the data analysis and normative model elements of these requirements which, respectively, mathematically examine and present information for decision makers and help make decisions.  One type of normative model used in the disaster management community which we employ in this paper is a multi-stage stochastic program.  This type of model is proposed as a way to make decisions regarding protecting networks against disasters or bringing supplies to communities after disasters.  The misspecification of the parameters defining this kind of model can severely limit the usefulness of the model.  In particular, we focus on transportation cost parameters, whose misspecification could lead to incorrect protection decisions, such as allocating too few or too many resources to parts of road networks affected by landslides and flash floods [\cite{WFP_link}].  
To our knowledge, both the DSS literature and the multi-stage stochastic program for disaster relief literature have not explored inverse optimization as a tool in estimating model parameters. \textit{We propose inverse optimization as a new approach for data analysis and demonstrate its ability to recover similar protection decisions as the originally parameterized stochastic network protection model.  We also demonstrate that accurate knowledge regarding the cost functions matters because it can change protection decisions when compared to the assumption of uniform cost for most of our experiments.}

The rest of the paper is organized as follows. Section \ref{sec:literature_review} investigates the literature related to our problem.  Section \ref{framework-explanation} provides appropriate mathematical background.  Section \ref{experimental-study} explains the experimental structure and the results.  Section \ref{conclusions} discusses our conclusions and ideas for future work.

\section{Literature Review}\label{sec:literature_review}

The literature in this section demonstrates that multi-stage disaster relief models and DSS models have not used inverse optimization in their frameworks before.

\subsection{Inverse Optimization for Transportation Problems}\label{lit_for_inverse_optimization}

Inverse optimization (IO) allows a user to parameterize particular functions in optimization and equilibrium problems using solutions to these problems [\cite{ahuja2001inverse}, \cite{bertsimas2015data}, \cite{zhang2011inverse}].  We focus on parameterizing the cost functions of a traffic equilibrium model.  \cite{thai2015multi} use a mathematical program with equilibrium constraints to minimize the difference between the simulated solutions and optimal solutions to the traffic equilibrium problem as a way of recovering the specified cost function parameters.  \cite{thai2018imputing} use a combination of methods by \cite{bertsimas2015data} and \cite{chen1998congested} to create a multi-objective program that minimizes the duality gap for the variational inequality and the difference between the optimal and observed solutions.  \cite{bertsimas2015data} use their inverse variational inequality problem along with kernel methods to estimate the cost functions.  \cite{zhang2017data} and \cite{zhang2018price} follow \cite{bertsimas2015data}'s methodology, with \cite{zhang2017data} involving different categories of vehicles and \cite{zhang2018price} emphasizing recovering both cost function and origin-destination matrices from real-world traffic data.
\cite{chow2014nonlinear} use techniques from \cite{ahuja2001inverse} but augment them to handle the nonlinear nature of their problem.  Finally, \cite{allen2021using} extend \cite{ratliff2014social}'s parameterization framework for multi-player Nash problems to the case of jointly convex generalized Nash equilibrium problems and demonstrate this by parameterizing a transportation game.

\subsection{Multi-Stage Disaster Relief Models}\label{lit_for_multi_stage}

There is a substantial literature on multi-stage stochastic programs for disaster relief and protection; see \cite{grass2016two}. Many methods for estimating cost functions in road networks include fuzzy numbers, Euclidean distances, road distance data, the Bureau of Public Roads (BPR) function (see Section \ref{subsection_cost_functions}), and stochastic parameters (referred to also as stochastic costs/scenario dependent costs).  None of them use inverse optimization to estimate costs, which is what we propose in this paper.  

First, \cite{zheng2013emergency} estimate cost parameters for moving supplies after natural disasters using fuzzy numbers for the time it takes to traverse between the supply and demand nodes.  
Second, \cite{barbarosoglu2004two} use road information and Euclidean distances between points for their cost parameters in their stochastic model pertaining to distributing supplies after natural disasters. \cite{chu2016optimization} also calculate the travel cost for several routes/paths of origin-destination pairs to capture the idea that one or more routes could fail in a disaster in their stochastic network protection model.  Travel cost is measured by a variable which takes on real numbers between 0 and 1 and which measures how close to the shortest path the demand for an OD pair is allowed to take through the network.
\cite{noyan2012risk} and \cite{doyen2019integrated} use a mixture of road data along with scenario dependent costs to form their cost functions, while \cite{mohammadi2016prepositioning} use exclusively scenario dependent costs. 

\cite{fan2010solving} employ the BPR function for their stochastic network protection problem for arc costs, but no stochastic parameters are involved.  For the rest of the BPR literature, either the capacity is impacted by the protection decisions and/or some of the parameters in the BPR function are stochastic [\cite{lu2018mean,lu2016bi,faturechi2014travel,asadabadi2017optimal,faturechi2018risk}].  For these multi-stage stochastic programs, inverse optimization methods would have captured a set of parameters that led to given flow patterns as seen in \cite{allen2021using} which could have been used to augment the existing cost function approaches.

\subsection{Disaster Support Systems}\label{lit_for_DSS}

We focus on reviewing DSSes that have a data analysis step in their processes.  
First, there are disaster support system papers that determine important quantities and parameters via simulation and/or physical models of the situation [\cite{alvear2013decision},\cite{cuesta2014real}, \cite{sahebjamnia2017hybrid},\cite{eguchi1997real}, \cite{fikar2016decision}, \cite{todini1999operational}, \cite{yilmaz2019finding}, \cite{kureshi2015towards}, \cite{yang2019emergency}, \cite{van2005decision}].  Second, DSS papers can also determine parameters via data processing as in \cite{fertier2020new}, \cite{horita2015development}, and \cite{zhang1994real}, or they can utilize machine learning to determine modeling structures as in \cite{abpeykar2014supervised}.  Other DSS papers use geographic information system (GIS) techniques to estimate parameters [\cite{rodriguez2018disaster,cioca2007spatial}]. \cite{horita2013approach} combine GIS and sensor information to estimate parameters.  In addition, some papers propose data fusion techniques such as ensemble Kalman filters [\cite{otsuka2016estimating}] and gradient based methods [\cite{kaviani2015decision}].  Inverse optimization allows a user to propose a model of the system and parameterize the model using data/simulated solutions\footnote{See \cite{tai2013identifying} for an example using proposed solutions.} and optimality conditions [\cite{ahuja2001inverse,chan2019inverse,bertsimas2015data,zhang2011inverse}], which can augment the information gained from data and, thus, could be useful for DSSes.  However, as can be seen from this review, DSSes have not used inverse optimization for data analysis.

\section{Hybrid Framework}\label{framework-explanation}

In the next few sections, we will explain our data analysis component (which presents information for decision makers) along with our normative model (which helps make decisions). The data analysis component centers on \cite{bertsimas2015data}'s work on parameterizing cost functions for the traffic equilibrium problem.  The normative model comes from \cite{fan2010solving} who suggest a two-stage network protection problem with equilibrium constraints to make protection decisions for road networks.  We propose pairing the two components together in the following sequence of steps, with $\theta$ representing the collection of parameters to be estimated by the inverse optimization model: 

\begin{enumerate}
    \item Input data $\hat{\mathbf{x}}^j,\ j=1...J$ into inverse optimization model (\ref{basic_bertsimas_formulation}) and obtain $\theta$ 
    \item Form stochastic network protection problem (SNPP) (\ref{SNPP_formulation}) with $\theta$ 
    \item Solve the SNPP (\ref{SNPP_formulation}) and obtain protection decisions $\mathbf{u}$
\end{enumerate}

\subsection{Data Analysis Component: Inverse Optimization}\label{sec:inverse-opt}


\cite{bertsimas2015data} utilize variational inequalities (VI) to represent optimization and equilibrium problems.  
\cite{bertsimas2015data} assume that the following extended variational inequality describes the $\epsilon$ equilibrium of a system, with $F:\mathbb{R}^n \rightarrow \mathbb{R}^n$, $\mathcal{F} \subset \mathbb{R}^n$, $\mathbf{x}\in \mathcal{F}$, and $\epsilon \in \mathbb{R}_+$:

\begin{equation}\label{VI_epsilon}
F(\mathbf{x}^*)^T (\mathbf{x} - \mathbf{x}^*) \geq -\epsilon,\ \forall \mathbf{x} \in \mathcal{F}.
\end{equation}

\noindent When $\epsilon=0$, we recover the classical VI.  In the case of our traffic application, we assume that $F$ is the cost function, representing the time per vehicle along each arc in the set $\mathcal{A}$ of arcs [\cite{lu2018mean,lu2016bi}].  Therefore, expression (\ref{VI_epsilon}) states that $\mathbf{x}^*$ solves the VI if the inner product between $F$ at $\mathbf{x}^*$ and the difference between any point in $\mathcal{F}$ and $\mathbf{x}^*$ is greater than a small, negative number.

\cite{bertsimas2015data} describe the Wardrop traffic equilibrium with nodes $\mathcal{N}$ and arcs $\mathcal{A}$ as having a node-arc incidence matrix $N \in \{-1,0,1\}^{|\mathcal{N}|\times |\mathcal{A}|}$ [\cite{marcotte2007traffic}], vectors $\mathbf{d}^w$ which contain the origin destination locations represented by the set $\mathcal{W}$ with a negative entry for the origin and a positive entry for the destination [\cite{marcotte2007traffic}], and a feasible set $\mathcal{F}$.  \cite{bertsimas2015data} define the $\mathcal{F}$ set as:\footnote{We define the $\mathcal{F}$ set for the inverse optimization model differently; see Appendix A.2.}
\begin{equation}\label{F_set_for_TEP}
    \mathcal{F} = \left\{ \mathbf{x} : \exists \mathbf{x}^w \in \mathbb{R}^{|\mathcal{A}|}_+\  s.t.\  \mathbf{x} = \sum\limits_{w\in \mathcal{W}} \mathbf{x}^w,\ N\mathbf{x}^w = \mathbf{d}^w \ \forall w \in \mathcal{W} \right\}
\end{equation}

\noindent in which $\mathbf{x}^w \in \mathbb{R}^{|\mathcal{A}|}_+$ represents the flow between origin and destination $w$ and $\mathbf{x} \in \mathbb{R}^{|\mathcal{A}|}_+$ represents the composite flow vector.  The corresponding $F$ function for the variational inequality is defined as $\mathbf{c}(\mathbf{x})$ such that $c_a:\mathbb{R}^{|\mathcal{A}|}_+ \rightarrow \mathbb{R}_+$ for arc $a$. 

The multipliers associated with the constraints in the $\mathcal{F}$ set should be non-negative because they represent the time it takes to travel from the associated node to the destination $w$ [\cite{fan2010solving}].  Therefore, we need to turn the equalities in the $\mathcal{F}$ set into inequalities [\cite{ban2005quasi,ban2006general}].  Respecting the definition of $N$ and $\mathbf{d}^w$, our traffic equilibrium problem in complementarity form is then [\cite{bertsimas2015data,sheffi_urban_transport,fan2010solving,gabriel2012complementarity,ban2006general,ban2005quasi}]:\footnote{We keep the row in $N$ that contains the destination, which is different from \cite{ban2005quasi} and \cite{ban2006general}.}

\begin{subequations}\label{traffic_equilibrium_problem}
\begin{equation}\label{TEP:cost}
    0 \leq \mathbf{c}(\mathbf{x}) + N^T \mathbf{y}^w \ \bot \ \mathbf{x}^w \geq 0,\  \forall w \in \mathcal{W}
\end{equation}
\begin{equation}\label{TEP:demand}
     0 \leq \mathbf{d}^w - N \mathbf{x}^w\  \bot\  \mathbf{y}^w \geq 0,\ \forall w \in \mathcal{W}
\end{equation}
\end{subequations}

\noindent We can show that $\mathbf{d}^w - N \mathbf{x}^w = 0$ when there is a solution for (\ref{traffic_equilibrium_problem}) and when we assume that the $\mathbf{c}(\mathbf{x})$ function is greater than 0 for all $\mathbf{x}\geq 0$ in a proof which is adapted from \cite{ban2005quasi}.  See the Appendix A.1.  We use (\ref{traffic_equilibrium_problem}) to generate data for the inverse optimization part of the framework, and the data generation process can be found in Section \ref{simulation_structure_section}.

For the inverse optimization model in \cite{bertsimas2015data}, we can then form an optimization model including each data point $\hat{\mathbf{x}}^j$ $j=1,...,J$ (with $J$ representing the total number of data points) representing the flow on the network such that: 

\begin{itemize}
\item There is one OD pair for each instance $\hat{\mathbf{x}}^j$. 

\item There is the same node-arc incidence matrix $N$ for each $\hat{\mathbf{x}}^j$.
\end{itemize}

\noindent The inverse optimization model for $J$ data points (corresponding to each of the $\hat{\mathbf{x}}^j$ flow patterns), parameters $\theta \in \Theta$ with $\Theta$ as a convex subset of $\mathbb{R}^{Z}$ ($Z$ representing a number of parameters), $\mathbf{y}^j \in \mathbb{R}^{|\mathcal{N}|}$, and $\epsilon \in \mathbb{R}^{J}$ is:  

\begin{subequations}\label{basic_bertsimas_formulation}
\begin{equation}
    \min\limits_{\theta \in \Theta,\mathbf{y},\epsilon} ||\epsilon||^2
\end{equation}
\begin{equation}
    -(N)^T \mathbf{y}^j \leq \mathbf{c}(\hat{\mathbf{x}}^j;\theta),\ j=1,...,J,
\end{equation}
\begin{equation}
    \mathbf{y}^j \geq 0,\ j = 1,...,J,
\end{equation}
\begin{equation}
    \mathbf{c}(\hat{\mathbf{x}}^j;\theta)^T \hat{\mathbf{x}}^j + (\mathbf{d}^j)^T \mathbf{y}^j \leq \epsilon^j,\ j=1,...,J,
\end{equation}
\end{subequations}

\noindent The derivation of this mathematical program can be found in Appendix A.2.  We solve this mathematical program using the \url{ipopt} solver [\cite{wachter2006implementation}].  There can be multiple forms for the vector-valued arc cost function $\mathbf{c}(\mathbf{x};\theta)$, which the next subsection will cover.

\subsubsection{Different Types of Cost Functions}\label{subsection_cost_functions}

We propose two different formulations for the vector valued function $\mathbf{c}(\mathbf{x})$ which represents the time per vehicle along each arc in the set of arcs $\mathcal{A}$ [\cite{lu2018mean,lu2016bi}].  Note that $\theta$ will represent the collection of all parameters for a given function. 

\begin{itemize}
\item \textbf{Linear Cost:} We assume that $c_a(\mathbf{x}) = \phi_a \mathbf{x}_a + \beta_a,\ \phi_a \in \mathbb{R}_+,\ \beta_a \in \mathbb{R}_+$, such that 
\begin{equation}
\mathbf{c}(\mathbf{x}) = \begin{bmatrix} \phi_1 & & \\ & \ddots & \\ & & \phi_{|\mathcal{A}|} \end{bmatrix} \mathbf{x} +  
\begin{bmatrix}
\beta_1 \\ \vdots \\ \beta_{|\mathcal{A}|}
\end{bmatrix}
\end{equation}

\noindent This function has been used for representing travel times such as in \cite{siri2020progressive}.  \cite{siri2020progressive} labels the $\beta_a$ as the ``free flow travel times'' or i.e. travel times without any interaction with other travelers [\cite{thai2015multi,bertsimas2015data,zhang2017data,zhang2018price,chow2014nonlinear}]. $\phi_a$ is the factor of additional time of having one more unit of flow on the arc.  In this paper, we assume that the free flow travel times are given, and our goal is to estimate $\phi_a$ for all $a \in \mathcal{A}$ (see Section \ref{results_section}).





\item \textbf{Bureau of Public Roads Function:} The Bureau of Public Roads function (BPR) [\cite{BPR_func,branston1976link}] is a common function utilized by transportation researchers when modeling flow along arcs in a network [\cite{thai2018imputing,bertsimas2015data,zhang2017data,zhang2018price}].  From \cite{sheffi_urban_transport}, the BPR function for arc $a$ is:
\begin{equation}
    c_a(\mathbf{x}_a) = t_a^0 \left( 1 + \alpha_a \left(\frac{\mathbf{x}_a}{c'_a} \right)^{\beta} \right).
\end{equation}

\noindent The $t_a^0$ is the free-flow travel time, $c'_a$ is the ``practical capacity'' which we just take as the normal capacity, and $\alpha_a$ \& $\beta$ are parameters which, following \cite{sheffi_urban_transport}, are commonly assumed to be 0.15 and 4 respectively, regardless of the arc.  In contrast, in this paper, we will assume that the $\alpha_a$ parameter is different for each arc and that it is the quantity we estimate with inverse optimization (see Section \ref{results_section}).  We linearize the BPR function using standard techniques [\cite{enme741_textbook,luathep2011global}].

\end{itemize}

\noindent  In our experiments, we compare the protection decisions made under the costs imputed using IO with the protection decisions made when a user has the original parameterization from (\ref{traffic_equilibrium_problem}) and with the protection decisions when a user assumes uniform cost, meaning average $\phi$ for the linear cost function and $0.15$ for the $\alpha$ in the BPR cost function. Section \ref{subsection_metrics} will explain this further.  With the cost functions defined, we discuss the normative model.

\subsection{Normative Model: Two-Stage Stochastic Model}

For the stochastic network protection model portion of the framework, we implement \cite{fan2010solving}'s two-stage network protection model with complementarity constraints with a few changes in the capacity function, the conservation of flow constraints, and the objective function.  We adopt much of the notation from \cite{fan2010solving} and extended definitions for these terms can be found in Appendix B.1: 
\begin{itemize}
    \item $\mathcal{A}$: the set of network arcs, and $m$ as the number of arcs.
    
    \item $\mathcal{N}$: the set of network nodes, and $n$ as the number of nodes.
    
    \item $K$: the number of destinations of flow in the network. 
    
    \item $\mathcal{S}$: the scenario set.
    
    \item $x_a^{k,s}$: the flow on arc $a$ that is destined for the $k$th destination in scenario $s$.  The vector $\mathbf{x}^{k,s} \in \mathbb{R}^m$ denotes the flow on all arcs. (units=thousands of vehicles)
    
    \item $f_{a}^s$: the total flow on arc $a$ in scenario $s$, and $\mathbf{f}^s$ as the vector containing all of the arcs. (units=thousands of vehicles)
    
    \item $u_a$: the decision variable controlling resources used to protect an arc $a$ against a crisis.  
    (units=proportion of necessary resources needed to fully insure the arc) 
    
    \item $W$: the node-link adjacency matrix. 
    
    
    \item $\mathbf{q}^k \in \mathbb{R}^n$: designates the amount of flow originating at each node that is headed to destination $k$.  
    (units = thousands of vehicles)
    
    \item $h_{a}^s(u_a)$: the capacity of an arc $a$ given first stage decision $u_a$ under scenario $s$:
    \begin{equation}
        h_a^s(u_a) = \begin{cases} \text{cap}_a\ \text{ if } a \notin \bar{\mathcal{A}} \\ \text{cap}_a - m_a^s (1-u_a)\ \text{ if } a \in \bar{\mathcal{A}} \end{cases}
    \end{equation}
    
    \noindent with $\text{cap}_a$ representing capacity of the arc without it being affected by a disaster, $m_a^s$ representing the amount of damage done to arc $a$ in scenario $s$ if not protected, and $\bar{\mathcal{A}}$ represents the set of arc vulnerable to the disaster.  Note that $m_a^s$ could be 0 in certain scenarios.  (units = thousands of vehicles)
    
    \item $t_a(\mathbf{f}^s)$ represents the time per vehicle along arc $a$ [\cite{lu2018mean,lu2016bi}] as a function of the flows $\mathbf{f}^s$ in scenario $s$.  We explore multiple different forms for $t_a$, described in Section \ref{subsection_cost_functions}.   
    
    \item $\lambda_i^{k,s}$ as ``the minimum time from node $i$ to destination $k$'' in scenario $s$ [\cite{fan2010solving}]. (units=travel time)
    
    \item $\mathbf{d}^{k,s}$ as the vector of extra variables that acts as a buffer for any flow that cannot be properly apportioned. (units=thousands of vehicles).
    
    \item $p^s$ as the probability of each scenario $s$.
    
    
\end{itemize}

\noindent \cite{fan2010solving}'s model with a modification to the complementarity constraints based on work by \cite{ban2005quasi} and \cite{ban2006general} is thus: 

\begin{subequations}\label{SNPP_formulation}
\begin{equation} \label{objective_func}
    \min \sum\limits_{s\in S} p^s Q^s(\mathbf{u},\mathbf{f}^s)
\end{equation}
\begin{equation} \label{budget_constr}
    \text{s.t. } \mathbf{u} \in \mathcal{D}
\end{equation}
\begin{equation} \label{capacity_constr}
    f^s_a = \sum\limits_{k=1}^K x_a^{k,s} \leq h^s_a(u_a^s),\ a \in \mathcal{A},\ s \in \mathcal{S}
\end{equation}
\begin{equation} \label{complementarity_constr}
    0 \leq x_{ij}^{k,s}\  \bot\  \left(t_a(\mathbf{f}^s) + \lambda_j^{k,s} - \lambda_i^{k,s}\right) \geq 0,\ \forall (i,j) \in \mathcal{A},\ \forall k=1...K,\ \forall s \in \mathcal{S}
\end{equation}
\begin{equation} \label{conservation_constr}
    0 \leq \mathbf{q}^k + \mathbf{d}^{k,s} - W \mathbf{x}^{k,s}\ \bot \ \mathbf{\lambda}^{k,s} \geq 0,\  \forall k=1...K,\ \forall s \in \mathcal{S}
\end{equation}
\end{subequations}

\noindent The $Q^s(\mathbf{u},\mathbf{f}^s)$ function (\ref{objective_func}) in the objective function has the following form: 
\begin{equation*}
    Q^s(\mathbf{u},\mathbf{f}^s) = \langle \mathbf{\psi}, \mathbf{u}^s \rangle + \gamma \langle \mathbf{f}^s, \mathbf{t}(\mathbf{f}^s) \rangle + 10000 \sum\limits_{k=1}^K || \mathbf{d}^{k,s} ||_2^2
\end{equation*}
\noindent The first term denotes the total cost of protection (with $\mathbf{\psi}$ as the dollar amount it costs to protect each arc fully); this term differs from the \cite{fan2010solving} paper which instead uses the cost of repair.  The second term computes the total travel time for all of the flow on each arc, sums these amounts, and then multiplies the sum by $\gamma$, which transforms travel time to financial units [\cite{fan2010solving}], keeping in the same units as the first term.  Note, $\mathbf{t}(\mathbf{f}^s)$ corresponds to the $\mathbf{c}(\mathbf{x})$ function from Section \ref{sec:inverse-opt}.  The third term makes it extremely costly for the model to use any of the buffer that the $\mathbf{d}^{k,s}$ vectors provide for the conservation of flow (\ref{conservation_constr}) constraints.  
Constraints (\ref{budget_constr}) represent the ``budgetary and technological restrictions'' [\cite{fan2010solving}].  We specified this further as just budgetary constraints of the form:
\begin{equation}\label{true_budget_constraint}
    \sum\limits_{a \in \mathcal{A}} u_a \leq I
\end{equation}

\noindent with $I$ representing the number of arcs we can afford to fully protect.  However, because $u_a$ are continuous variables, we can protect more than $I$ number of arcs partially because we are treating $u_a$ as proportions.  The capacity constraints (\ref{capacity_constr}) have an $s$ dependence for the $h_a^s$ functions because the $m_a^s\ \forall a \in \bar{\mathcal{A}}$ are scenario-dependent.  The constraints in (\ref{complementarity_constr}) encapsulate the idea that there should be no flow on the arc $a$ on its way to destination $k$ in scenario $s$ unless that arc is part of the minimal travel time route to destination $k$.  The complementarity constraints in (\ref{conservation_constr}) include the conservation of flow constraints that ensure flow begins and ends at the appropriate places in the network.  The $\mathbf{d}^{k,s}$ vectors are buffers in case some of this flow does not fulfill the conservation of flow constraints; \cite{fan2010solving} define them as variables to ensure a feasible solution.   

We specify some of the parameters for the model that will not change over the course of the paper:

\begin{itemize}
    \item The $\text{cap}_a$ value is set to 8 for all arcs $a$.
    
    \item The $m_a^s$ value (amount of damage) is set to 8.
    
    \item In the objective function, we set $\gamma = 1$ (following \cite{fan2010solving}) and set the $\psi$ vector to 1 because we do not want cost to be prohibitive.
    
\end{itemize}

\subsubsection{Big M Method for Complementarity Constraints and Progressive Hedging Algorithm for Solving Two-Stage Problem}

\cite{fan2010solving} note in their paper that this problem is difficult to solve because of (1) the complementarity constraints and (2) the stochastic elements.  In order to handle the complementarity constraints, we use the disjunctive constraint/big M method approach [\cite{fortuny1981representation,hart2017mathematical}].  As an example, we take the complementarity condition from (\ref{complementarity_constr}) and produce a series of constraints:
\begin{subequations}
\begin{equation}
    x_{ij}^{k,s} \geq 0
\end{equation}
\begin{equation}
    t_a(\mathbf{f}^s) + \lambda_j^{k,s} - \lambda_i^{k,s} \geq 0
\end{equation}
\begin{equation}\label{comp_constraint_1}
    x_{ij}^{k,s} \leq M_{ij}^{k,s} (b_{ij}^{k,s})
\end{equation}
\begin{equation}\label{comp_constraint_2}
    \left(t_a(\mathbf{f}^s) + \lambda_j^{k,s} - \lambda_i^{k,s} \right) \leq M_{ij}^{k,s} (1-b_{ij}^{k,s})
\end{equation}
\end{subequations}

\noindent The $b_{ij}^{k,s}$ is a binary variable, and $M_{ij}^{k,s}$ is a sufficiently large number, which forces at least one of the two terms in (\ref{comp_constraint_1}) or (\ref{comp_constraint_2}) to be 0.  We repeat the same procedure for the complementarity constraints in (\ref{conservation_constr}).  See Appendix B.2 for information on calculating the $M_{ij}^{k,s}$ values.  To handle the stochasticity of this problem, we follow \cite{fan2010solving} by employing the progressive hedging (PH) algorithm.  Proposed by \cite{rockafellar1991scenarios}, the PH algorithm at its most basic level solves scenario subproblems created by the random variable(s) involved in the original problem using an approach in which there is a penalty term that encourages first-stage variables to tend toward the ``aggregate'' solution [\cite{rockafellar1991scenarios}] that is computed after each iteration of the algorithm.  See the following references for more information about using the algorithm and setting its parameters: [\cite{watson2011progressive,ryan2013toward,carpentier2013long,veliz2015stochastic,fan2010solving,crainic2011progressive,gonccalves2012applying,palsson1994stochastic,gul2015progressive,hvattum2009using,lamghari2016progressive,mulvey1991applying}].
We use the implementation of the PH algorithm found in the \url{pysp} extension [\cite{watson2012pysp}] of the \url{pyomo} package [\cite{hart2011pyomo,hart2017mathematical}] in Python.  We use \url{gurobi} for the mixed-integer quadratic programming sub-problems arising as part of the PH algorithm.

\section{Experimental Study}\label{experimental-study}

In this section, we define our experimental setup and the metrics by which we will evaluate the experiments.  Most of our experimental results demonstrate inverse optimization enables users to recover comparable protection decisions as the original cost protection decisions, and there is a difference between protection decisions made under uniform cost parameters and the original or IO parameterizations.

\subsection{Experimental Setup}\label{simulation_structure_section}

First, we consider two networks on which to test our hybrid framework: a 4x4 grid in Figure \ref{fig:network-grid} 
and the Nguyen \& Dupuis network [\cite{nguyen1984efficient}] in Figure \ref{fig:network-ND}, both of which we make bidirectional.

\begin{figure}[H]
\begin{subfigure}{0.5\textwidth}
\centering
\includegraphics[height=0.2\textheight,keepaspectratio]{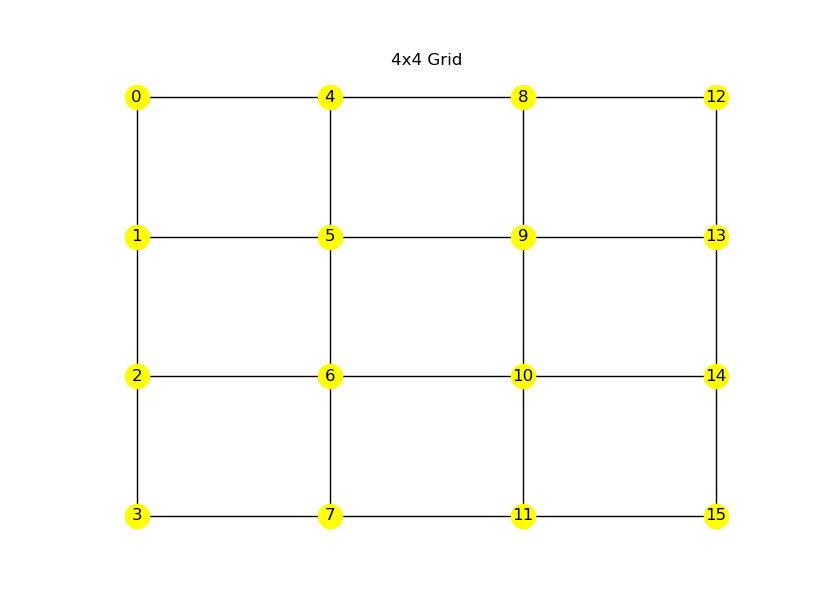}
\caption{4x4 Directed Grid Network, 16 Nodes \& 48 Arcs}\label{fig:network-grid}
\end{subfigure}
\begin{subfigure}{0.5\textwidth}
\centering
\includegraphics[height=0.2\textheight,keepaspectratio]{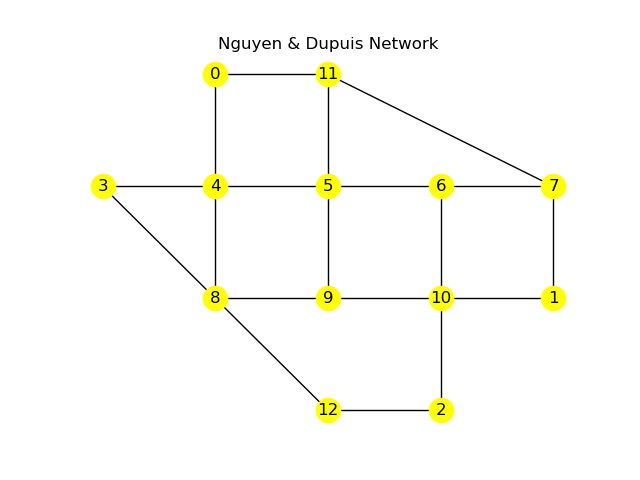}
\caption{Nguyen \& Dupuis Network, 13 Nodes \& 38 Arcs}\label{fig:network-ND}
\end{subfigure}
\caption{Illustrative Road Networks Utilized for Experiments}
\label{fig:networks}
\end{figure}

\vspace{1em}

\noindent We generated data (observations of flow $\hat{\mathbf{x}}^j$ for all $j=1,...,J$), as discussed in Section 2.1, using the forward problem in the form of the complementarity model (\ref{traffic_equilibrium_problem}); we solve (\ref{traffic_equilibrium_problem}) using PATH [\cite{dirkse1995path,ferris2000complementarity}] in GAMS. The set $\Lambda$ represents the origin-destination pairs utilized for each run of the complementarity model.  For this paper, $\Lambda$ is the set of all different origin-destination (OD) pairs for each network for the data generation process, one pair for each run of the complementarity model.  In more complicated versions, $\Lambda$ would consist of multiple different OD pairs per run of the complementarity model. 
The algorithm below illustrates generating the $\hat{\mathbf{x}}^j$ data for $j=1,...,J$ given a set of configurations $\Lambda$ with $|\Lambda| = J$. 

\vspace{1em}

\begin{algorithm}[H]
\KwData{The set $\Lambda$ of configurations}
\For{$j$ = 1:$|\Lambda|$}{
Build the traffic equilibrium model (\ref{traffic_equilibrium_problem}) with the $j$th configuration of OD pair(s)\\
Solve the traffic equilibrium model (\ref{traffic_equilibrium_problem}) with PATH in GAMS\\
Store optimal $\hat{\mathbf{x}}^j$ \\
}
\caption{Generating the Data}
\end{algorithm}

\vspace{1em}

\noindent The generated data $\hat{\mathbf{x}}^j, j=1...J$ is used as input into the inverse optimization model to determine the parameters $\theta$ for the cost function.  We then carry through with the rest of the hybrid framework described at the beginning of Section \ref{framework-explanation} to obtain the protection decisions.

\subsection{Metrics}\label{subsection_metrics}

In order to evaluate our hybrid framework, we must solve the stochastic network protection problem three times for each set of generated data because we must compare the protection decisions under the original cost parameters, the inverse cost parameters, and the assumption of uniform cost parameters: 

\begin{itemize}

\item We define the ``IO information protection decisions'', denoted $\mathbf{u}$, as the protection decisions the two-stage model would make based on the cost vector obtained using the inverse optimization algorithm.

\item We define the ``original information protection decisions'', denoted $\hat{\mathbf{u}}$, as the protection decisions that the two-stage model would make if it were directly given the ``original'' cost structure that (\ref{traffic_equilibrium_problem}) generated the data $\hat{\textbf{x}}^j, j=1...J$.  The goal of the framework is for the inverse optimization algorithm to be able to provide a cost estimate that will result in the same/comparable protection decisions as under the original cost structure.  We use the Original-IO metric below to evaluate the ``comparability'' of the protection decisions, which refers to the metric being close to 0.

\item We define the ``uniform information protection decisions'', denoted $\bar{\mathbf{u}}$, as those decisions that the two-stage model would make if it were given a uniform cost structure for the network.  If the original information decisions differ significantly from the uniform information decisions, then this provides evidence that knowing the cost structure of the network is important.

\end{itemize}

\noindent Our performance metrics capture the difference between the protection decisions made under different costs:
\begin{itemize}

    \item \textbf{Original-IO (O-IO)}: $||\hat{\mathbf{u}}-\mathbf{u} ||_2$
    
    \item \textbf{Uniform-IO (U-IO)}: $||\bar{\mathbf{u}} - \mathbf{u} ||_2$
    
    \item \textbf{Uniform-Original (U-O)}: $|| \bar{\mathbf{u}} - \hat{\mathbf{u}} ||_2$.
    
\end{itemize}

\subsection{Results}\label{results_section}

In this section, the hybrid inverse optimization-stochastic programming framework displayed at the beginning of Section \ref{framework-explanation} is tested through a series of experiments. For each trial of each experiment, Algorithm 1 is used to generate $\hat{\mathbf{x}}^j$ for $j=1...J$.  Next, for each trial of each experiment, the hybrid framework is used to estimate a set of parameters for the current cost function and to find the protection decisions under that parameterization given a set budget.

In the experiments, two components are varied: the type of graph (4x4 grid vs. Nguyen \& Dupuis) and the type of cost function (linear vs. BPR functions).  For all of the experiments, the following remain the same:
\begin{itemize}
    \item The MATLAB built-in function \url{unifrnd} is used to create the random original costs for the data generation part of each trial of each experiment.
    
    \item There are 10 trials for each experiment, each with a different original (and random) cost parameterization.
    
    \item The $k$ for $\mathbf{q}^k$ is equal to 2.  For the 4x4 Grid, eight units of flow go from node 0 to node 15 and from 15 to node 0 and, for the Nguyen \& Dupuis network, eight units of flow go from node 0 to node 2 and from node 2 to node 0.  See Figure \ref{fig:networks} for the node references.
    
    \item The set of scenarios correspond to each pair of arcs indicated in Figures \ref{fig:uniform_grid_scenarios} and \ref{fig:uniform_ND_scenarios} in the next subsection.  Each set has an equal chance of failing.
    
    \item The budget for the constraint (\ref{true_budget_constraint}) is set to 6.  It could be modified in future work.
    
    \item For each run of the stochastic network protection problem (SNPP), $\epsilon=0.01$ or $\epsilon=0.001$ for the $g^{(k)}$ PH error metric [\cite{watson2011progressive}]; we utilize the default error metric outlined in the \url{pysp} documentation in \cite{hart2017pyomo}.  The maximum number of iterations is 300.  Therefore, the progressive hedging algorithm stops if it reaches the tolerance or if it reaches the 300 iterations, whichever occurs first.  Two runs of each experiment are done, one under $\epsilon=0.01$ and one under $\epsilon=0.001$.
\end{itemize}

\subsubsection{Detailed Experiment Descriptions}\label{subsec:experiment_descriptions}

Table \ref{table:experiments} presents the experiments for the $4\times4$ Grid network presented in Figure \ref{fig:network-grid} and the Nguyen \& Dupuis network presented in Figure \ref{fig:network-ND}.  The labels of Table \ref{table:experiments} correspond to the following meanings:
\begin{itemize}
    \item Network: The network used  
    \item Cost Function: The transportation function used 
    \item Parameters: Distributions from which the original cost function coefficients are drawn
    \item \# Scenarios: References the number of arc pairs that are vulnerable to destruction
    \item $\rho$ Used: Indicates a parameter value in the PH Algorithm 
    \item Number of Cores: Refers to the number of computer cores utilized for the Experiments
\end{itemize}

\noindent The protection decisions made under the original, uniform, and IO costs --- hereafter referred to as $\hat{\theta}, \bar{\theta},$ and $\theta$ respectively --- are examined for each experiment in the next subsection. 

\begin{table}[H]
    \centering
    \begin{tabular}{c|c|c|c|c}
        \textbf{Experiment \#} & I & II & III & IV \\ \hline 
        \textbf{Network} & 4x4 Grid & 4x4 Grid & N \& D & N \& D \\ \hline 
        \textbf{Cost Function} & Linear & BPR & Linear & BPR \\ 
        \hline
        \textbf{Parameters} & \makecell{$\phi_a \sim U[2,10]$ \\ $\beta_a \sim U[2,10]$} & \makecell{$\alpha_a \sim U[0.1,0.2]$\\$t^0_a \sim U[2,10]$ \\ $c'_a = 8$} & \makecell{$\phi_a \sim U[2,10]$ \\ $\beta_a \sim U[2,10]$} & \makecell{$\alpha_a \sim U[0.1,0.2]$\\$t^0_a \sim U[2,10]$ \\ $c'_a = 8$} \\ 
        \hline
        \textbf{\# Scenarios} & 12 & 12 & 9 & 9 \\ 
        \hline
        \textbf{$\rho$ Used} & 5 & 5 & 5 & 5\\ \hline
        \textbf{Number of Cores} & 8 & 8 & 8 & 8 \\
        \hline
        
    \end{tabular}
    \caption{Experiment Descriptions}
    \label{table:experiments}
\end{table}

\noindent \textbf{Experiment I:} In the first experiment, the linear cost function along with the 4x4 grid are utilized.  The linear cost function is $\phi_a x_a + \beta_a$ for each arc $a$.  As mentioned at the beginning of Section \ref{results_section}, the scenarios are chosen such that every other pair of arcs are vulnerable to complete destruction. This can be seen in Figure \ref{fig:uniform_grid_scenarios} through the placement of the triangles with ``lightening bolts,'' indicating the arc pairs at risk.  Each arc pair is given a 1/12 chance of failing.

\begin{figure}[H]
\begin{center}
\includegraphics[height=0.22\textheight,keepaspectratio]{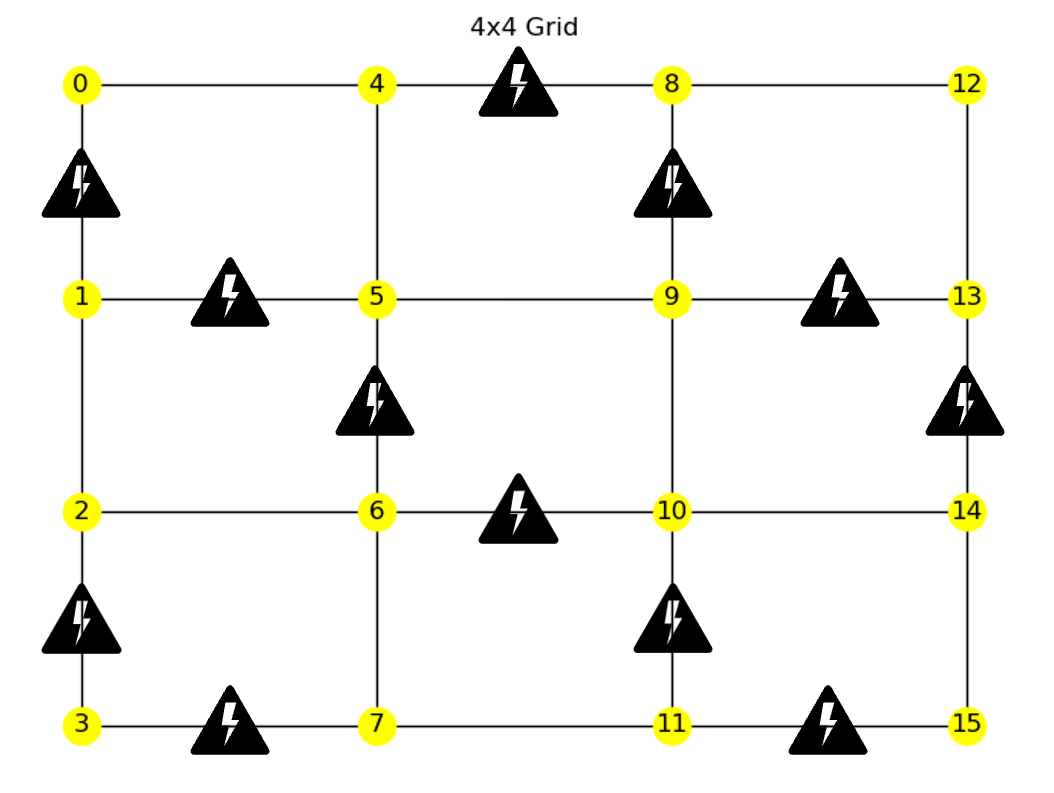}
\caption{Scenarios for Experiments I \& II}
\label{fig:uniform_grid_scenarios}
\end{center}
\end{figure}

Furthermore, for the experiments involving the linear cost function, only the $\phi_a$ parameters are estimated.  $\beta_a$, the free flow travel time for arc $a$, is assumed to be known in the majority of the papers cited in the literature review on estimating cost functions using inverse optimization [\cite{thai2015multi,thai2018imputing,zhang2017data,chow2014nonlinear,bertsimas2015data,zhang2018price}]. 
Consequently, for the $\hat{\theta}$, $\bar{\theta}$, and $\theta$ stochastic protection models involving linear cost, the $\beta_a$ terms are the same across all of them.  For the $\hat{\theta}$ protection model, the $\phi_a$ terms are the original cost values generated by the \url{unifrnd} MATLAB function.  For the $\theta$ protection model, the $\phi_a$ terms come from the inverse optimization model (\ref{basic_bertsimas_formulation}).  Finally, for the $\bar{\theta}$ protection model, the $\phi_a = 6$ for all $a$, as indicated in Table \ref{table:experiments}.

\vspace{1em}

\noindent \textbf{Experiment II:} In Experiment II, the inverse optimization algorithm computes the $\alpha_a$ parameters for each arc $a$ for the BPR cost function.  \cite{wong2016network} support having different $\alpha_a$ parameters across the network because they vary the $\alpha$ based on the structure of the network involved.  \cite{lu2016estimating} create their BPR function such that the $\alpha$ parameter value differs for each type of vehicle in their simulation, thus again showing that the $\alpha$ parameter can be different than the standard uniform 0.15 noted in Section \ref{subsection_cost_functions}.  The IO model is given the randomly chosen $t^0_a$ parameters and the capacity levels (all set to 8), and it is asked to estimate the $\alpha_a$ values.  The uniform parameter value that is chosen for the BPR experiments is 0.15 because that is the traditionally chosen parameter value in most models as noted in Section \ref{subsection_cost_functions} with the source \cite{sheffi_urban_transport}.  The scenario set up is the same as in Experiment I (see Figure \ref{fig:uniform_grid_scenarios}).

\vspace{1em}

\noindent \textbf{Experiment III:} Experiment III utilizes the linear cost function for the Nguyen \& Dupuis network. Figure \ref{fig:uniform_ND_scenarios} illustrates the arcs that have a chance of failing, which are again chosen such that every other pair of arcs are vulnerable to complete destruction. There are 9 pairs of arcs indicated, which means each pair has a 1/9 chance of completely failing.  

\begin{figure}[H]
\begin{center}
\includegraphics[height=0.22\textheight,keepaspectratio]{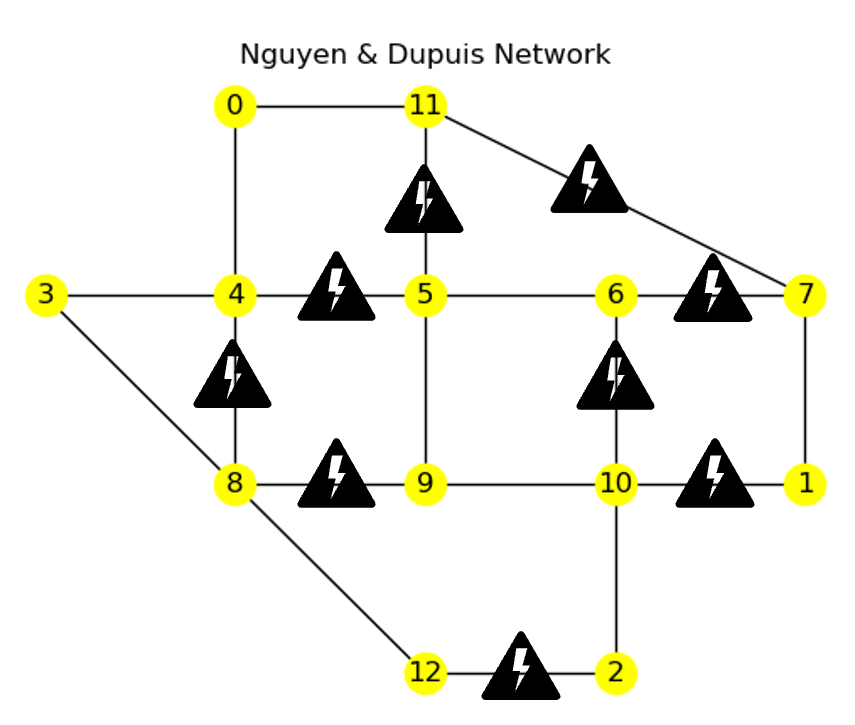}
\caption{Scenarios for Experiments III \& IV}
\label{fig:uniform_ND_scenarios}
\end{center}
\end{figure}

\noindent The notes about the linear cost function discussed in Experiment I hold for this experiment; only the network has changed along with the $\mathbf{q}^k$ for the SNPP. See the beginning of Section \ref{results_section} for the note about $\mathbf{q}^k$. 

\vspace{1em}

\noindent \textbf{Experiment IV:} Experiment IV employs the BPR cost function along with the Nguyen \& Dupuis network.  The scenario pattern is the same as in Experiment III (see Figure \ref{fig:uniform_ND_scenarios}), and the description of Experiment II for the BPR function holds for this experiment as well.  

\subsubsection{Conclusions from the Experiments}\label{subsec:conclusions_from_experiments}

The results of the experiments are evaluated with respect to two hypotheses regarding confidence intervals of the means and to a direct comparison of the means of the O-IO, U-IO, and U-O metrics defined in Section \ref{subsection_metrics}.\footnote{See Appendix C.1 for the flow error metrics for the IO $\alpha$ values because, as can be seen from Figure \ref{fig:experiment_2_metrics}, the IO $\alpha$ values are different from the original $\alpha$ values.} See Appendix C.2 for information on the medians \& minimum/maximums of the data and see Appendix C.3 for the run times of the experiments.  

The first hypothesis states the O-IO confidence intervals for the experiments will include 0 because the inverse optimization framework can recover comparable protection decisions to the $\hat{\theta}$ protection decisions.  Looking at Tables \ref{table:means_CI_99_epsilon_0_01}-\ref{table:means_CI_99_epsilon_0_001}, all of the confidence intervals include 0.  Therefore, there is evidence in favor of the hypothesis.

The second hypothesis states that the U-IO and U-O metric confidence intervals will not include 0 because having either cost parameters that are learned from IO or the original parameters makes a difference in protection decisions when compared to the case of uniform cost parameters.  Tables \ref{table:means_CI_99_epsilon_0_01}-\ref{table:means_CI_99_epsilon_0_001} indicate that Experiments I-III present evidence in favor of the hypothesis, but Experiment IV falls short since the confidence intervals for U-IO and U-O both include 0.

Experiments I-III demonstrate that using $\bar{\theta}$ (uniform cost) leads to different protection decisions than the $\hat{\theta}$ or $\theta$ costs.  Comparing the means as a percentage of the total budget in Tables \ref{table:means_CI_99_epsilon_0_01} and \ref{table:means_CI_99_epsilon_0_001}, we see that the O-IO metric as a percentage of the budget is small, while the U-IO and the U-O metrics as a percentage of the budget are many times greater.  The small values of the O-IO metric as a percentage of the budget indicate that IO can be used to recover parameters for the SNPP model, while the comparatively large values of the U-IO and U-O metrics indicate that the uniform protection decisions are quite different from the IO and original protection decisions, confirming the value of IO in recovering the network parameters.  Boxplots in Figures \ref{fig:experiment_1_metrics} and \ref{fig:experiment_2_metrics} (along with the boxplots for Experiment III in Appendix C.2) tell the same story.

\begin{table}[H]
\centering
\begin{footnotesize}
\begin{tabular}{c|cc|cc|cc|cc}
     & \multicolumn{2}{c}{\textbf{Experiment I}} & \multicolumn{2}{c}{\textbf{Experiment II}} & \multicolumn{2}{c}{\textbf{Experiment III}} & \multicolumn{2}{c}{\textbf{Experiment IV}} \\ \cline{2-9}
     & Mean & CI & Mean & CI & Mean & CI & Mean & CI\\ \hline
     \textbf{O-IO}& \makecell{0.0152\\0.25\%} &(-0.0101, 0.0405)& \makecell{0.0015\\0.03\%} &(-0.0007, 0.0038)& \makecell{0.0263\\0.44\%} &(-0.0037, 0.0564)& \makecell{0.0041\\0.07\%} &(-0.0017, 0.0099)\\ \hline \textbf{U-IO}& \makecell{0.3473\\5.79\%} &(0.2796, 0.4149)& \makecell{0.0564\\0.94\%} &(0.0115, 0.1013)& \makecell{0.2668\\4.45\%} &(0.2077, 0.3259)& \makecell{0.0261\\0.43\%} &(-0.028, 0.0801)\\ \hline \textbf{U-O}& \makecell{0.3466\\5.78\%} &(0.2778, 0.4153)& \makecell{0.0559\\0.93\%} &(0.0108, 0.1009)& \makecell{0.264\\4.4\%} &(0.1982, 0.3299)& \makecell{0.0264\\0.44\%} &(-0.0285, 0.0812)
\end{tabular}
\end{footnotesize}
\caption{Means, Means as Percentage of $I=6$ Budget, and 99\% Confidence Intervals (CI) for Experiments, $\epsilon=0.01$} \label{table:means_CI_99_epsilon_0_01}
\end{table}

\begin{table}[H]
\centering
\begin{footnotesize}
\begin{tabular}{c|cc|cc|cc|cc}
     & \multicolumn{2}{c}{\textbf{Experiment I}} & \multicolumn{2}{c}{\textbf{Experiment II}} & \multicolumn{2}{c}{\textbf{Experiment III}} & \multicolumn{2}{c}{\textbf{Experiment IV}} \\ \cline{2-9}
     & Mean & CI & Mean & CI & Mean & CI & Mean & CI\\ \hline
     \textbf{O-IO}& \makecell{0.0148\\0.25\%} &(-0.0107, 0.0402)& \makecell{0.0013\\0.02\%} &(-0.0002, 0.0027)& \makecell{0.0304\\0.51\%} &(-0.0147, 0.0756)& \makecell{0.0117\\0.19\%} &(-0.0168, 0.0402)\\ \hline \textbf{U-IO}& \makecell{0.3313\\5.52\%} &(0.2566, 0.4059)& \makecell{0.0638\\1.06\%} &(0.01, 0.1176)& \makecell{0.2643\\4.41\%} &(0.1958, 0.3329)& \makecell{0.0197\\0.33\%} &(-0.016, 0.0554)\\ \hline \textbf{U-O}& \makecell{0.3308\\5.51\%} &(0.2557, 0.4058)& \makecell{0.0634\\1.06\%} &(0.0095, 0.1173)& \makecell{0.2594\\4.32\%} &(0.187, 0.3317)& \makecell{0.0257\\0.43\%} &(-0.0158, 0.0673) \\
\end{tabular}
\end{footnotesize}
\caption{Means, Means as Percentage of $I=6$ Budget, and 99\% Confidence Intervals (CI) for Experiments, $\epsilon=0.001$} \label{table:means_CI_99_epsilon_0_001}
\end{table}

\begin{figure}[H]
\begin{subfigure}{0.5\textwidth}
\centering
\includegraphics[height=0.2\textheight,keepaspectratio]{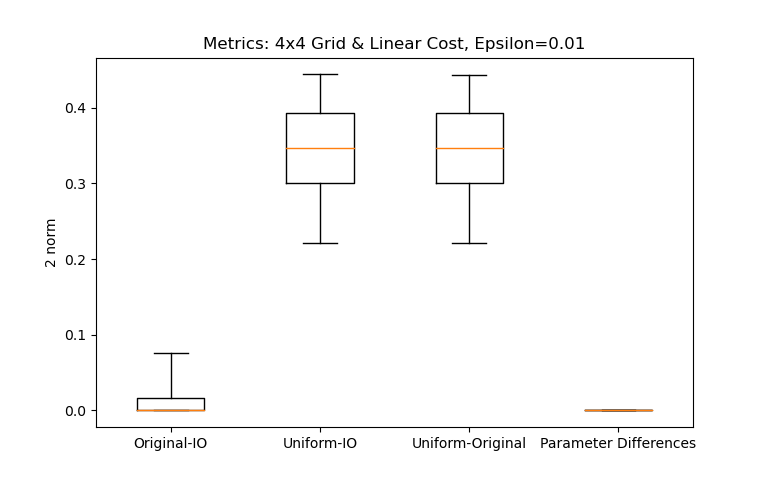}
\caption{Experiment I Results for $\epsilon=0.01$}\label{fig:experiment_1_0_01}
\end{subfigure}
\begin{subfigure}{0.5\textwidth}
\centering
\includegraphics[height=0.2\textheight,keepaspectratio]{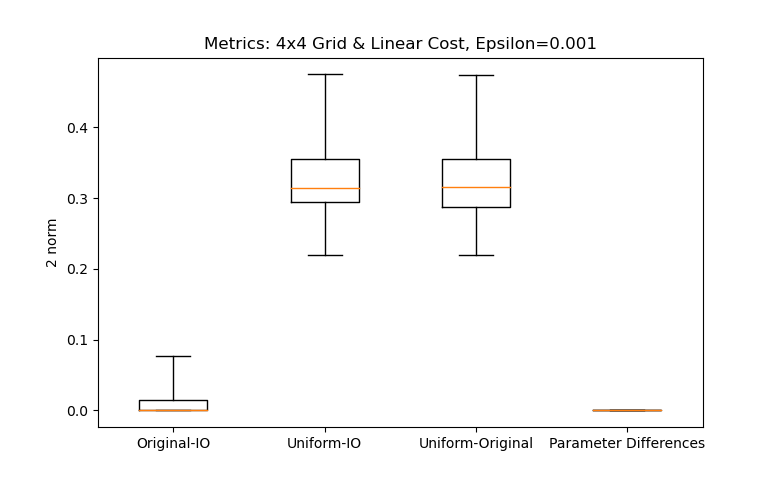}
\caption{Experiment I Results for $\epsilon=0.001$}\label{fig:experiment_1_0_001}
\end{subfigure}
\caption{Experiment I Results: 4x4 Grid Network with Linear Cost.  The parameter differences refer to the $\phi$ differences.}
\label{fig:experiment_1_metrics}
\end{figure}

\begin{figure}[H]
\begin{subfigure}{0.6\textwidth}
\centering
\includegraphics[height=0.2\textheight,keepaspectratio]{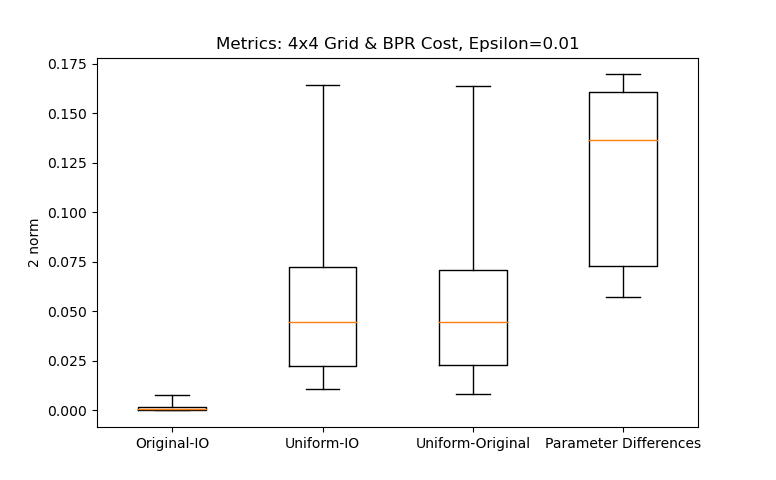}
\caption{Experiment II Results for $\epsilon=0.01$}\label{fig:experiment_2_0_01}
\end{subfigure}
\begin{subfigure}{0.4\textwidth}
\centering
\includegraphics[height=0.2\textheight,keepaspectratio]{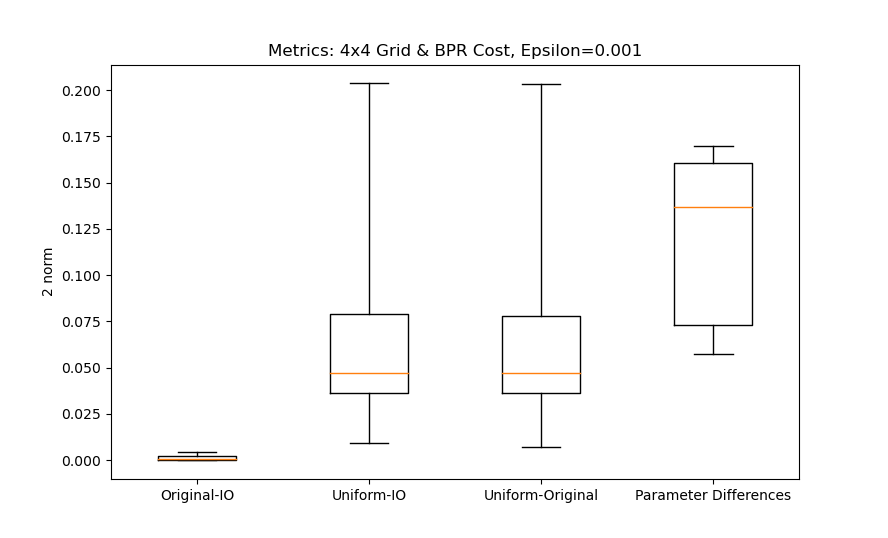}
\caption{Experiment II Results for $\epsilon=0.001$}\label{fig:experiment_2_0_001}
\end{subfigure}
\caption{Experiment II Results: 4x4 Grid with BPR Function.  Parameter differences here refers to the $\alpha$ differences.}
\label{fig:experiment_2_metrics}
\end{figure}

\section{Conclusions \& Future Work}\label{conclusions}

In this paper, we have demonstrated that inverse optimization can be used as a tool to make better protection decisions in multi-stage stochastic programs for disaster relief.  Through experiments with two different networks and two different cost functions, we demonstrate that IO can be used to recover network parameters that produce similar protection decisions as the original parameters that were used to generate the data in Algorithm 1. For most of the experiments, we also demonstrate that the protection decisions are different when we have either cost parameters learned from IO or the original cost parameters compared to the protection decisions that would have been made under the assumption of uniform cost.  Finally, we show there is a difference between the means of the O-IO versus U-IO and U-O metrics for Experiments I-III.  These results suggest that inverse optimization can be used as a data analysis approach in a DSS and as a way to estimate cost parameters in multi-stage stochastic programs for disaster management.

With regard to future work, estimating costs such that they are a function of the disaster would be something worth pursuing; indeed, it may be possible to incorporate risk metrics such as those proposed by \cite{cantillo2019assessing} and \cite{guo2017seismic}.  In addition, expanding the experiments such that there is interaction between OD pairs in both the data set for the IO mathematical program and in the flow patterns for the SNPP would enrich the analysis.  Furthermore, obtaining real data on scenarios and on traffic patterns would allow us to take these simulated results and apply them to the real world.

\section{Acknowledgements}

Allen was partially funded by a Graduate Fellowship in STEM Diversity while completing this research (formerly known as a National Physical Science Consortium Fellowship).  Allen was also supported by a Flagship and Dean's Fellowships from the University of Maryland, College Park and has worked for Johns Hopkins University Applied Physics Lab in the summers.  Terekhov and Gabriel have no funding sources or conflicts of interest to report.

We would like to thank Dr. David Woodruff of University of California, Davis for answering questions regarding the \url{pysp} package.


\bibliographystyle{apacite}
\bibliography{interactapasample}
\nocite{}

\appendix

\numberwithin{equation}{section}
\numberwithin{table}{section}
\numberwithin{figure}{section}

\section{Data Analysis Component: Inverse Optimization}















\subsection{Proof of $\mathbf{d}^w - N \mathbf{x}^w = 0$}

\begin{lemma} 
For the following complementarity problem:
\begin{subequations}\label{traffic_equilibrium_problem_appendix}
\begin{equation}\label{TEP:cost_appendix}
    0 \leq \mathbf{c}(\mathbf{x}) + N^T \mathbf{y}^w \ \bot \ \mathbf{x}^w \geq 0,\  \forall w \in \mathcal{W}
\end{equation}
\begin{equation}\label{TEP:demand_appendix}
     0 \leq \mathbf{d}^w - N \mathbf{x}^w\  \bot\  \mathbf{y}^w \geq 0,\ \forall w \in \mathcal{W}
\end{equation}
\end{subequations}

\noindent $\mathbf{d}^w - N \mathbf{x}^w = 0$ when there is a solution for (\ref{traffic_equilibrium_problem_appendix}) and when we assume that the $\mathbf{c}(\mathbf{x})$ function is greater than 0 for all $\mathbf{x}\geq 0$.
\end{lemma}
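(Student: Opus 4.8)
The plan is to prove the identity one origin--destination pair at a time. Fix $w \in \mathcal{W}$ and set $\mathbf{v}^w := \mathbf{d}^w - N\mathbf{x}^w$. The feasibility half of the complementarity condition (\ref{TEP:demand_appendix}) already gives $\mathbf{v}^w \geq 0$ componentwise, so the whole task reduces to showing that the entries of $\mathbf{v}^w$ sum to zero: a nonnegative vector whose entries sum to zero must be the zero vector, and that is exactly the claim $\mathbf{d}^w - N\mathbf{x}^w = 0$.

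To produce the zero sum I would premultiply by the all-ones vector $\mathbf{1}^T$ and use the structure of the data. Because $N$ is the \emph{full} node--arc incidence matrix (this is precisely where keeping the destination row matters), every column has a single $+1$ at its head and a single $-1$ at its tail, so each column sums to zero and $\mathbf{1}^T N = \mathbf{0}^T$; consequently $\mathbf{1}^T N \mathbf{x}^w = 0$. Likewise $\mathbf{d}^w$ carries a single negative entry at the origin and the matching positive entry at the destination, so $\mathbf{1}^T \mathbf{d}^w = 0$. Combining these, $\mathbf{1}^T \mathbf{v}^w = \mathbf{1}^T\mathbf{d}^w - \mathbf{1}^T N\mathbf{x}^w = 0$, and together with $\mathbf{v}^w \geq 0$ this forces $\mathbf{v}^w = \mathbf{0}$.

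The positivity hypothesis $\mathbf{c}(\mathbf{x}) > 0$ enters in the complementary, node-by-node reading of the argument that mirrors \cite{ban2005quasi}. From the cost condition (\ref{TEP:cost_appendix}), any arc $a=(i,j)$ that carries flow ($x^w_a>0$) is tight, so $y^w_i = c_a(\mathbf{x}) + y^w_j \geq c_a(\mathbf{x}) > 0$ using $\mathbf{y}^w \geq 0$; hence every node from which flow departs has a strictly positive multiplier, and the demand-complementarity (\ref{TEP:demand_appendix}) then pins exact conservation at that node. A short feasibility check handles non-destination nodes with no outgoing flow (their residual being nonnegative forces their inflow to vanish as well), so conservation holds at every node except possibly the destination, whose multiplier may legitimately equal zero.

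I expect the destination node to be the main obstacle. The constraints are one-sided inequalities, so a priori demand could be exceeded at some nodes and under-met at others, and the destination is the one node where the potential can vanish, leaving the purely local complementarity argument unable to certify conservation. The resolution is to fall back on the global balance identity $\mathbf{1}^T \mathbf{v}^w = 0$ derived above, which forces the single remaining residual entry to zero once all the others are known to vanish. This is also why keeping the destination row in $N$ is essential: it is exactly what makes $\mathbf{1}^T N = \mathbf{0}^T$ and thus lets the aggregate balance close the argument.
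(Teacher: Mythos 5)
Your proof is correct, but it takes a genuinely different and more economical route than the paper's. The paper argues by contradiction with a node-by-node case analysis adapted from \cite{ban2005quasi}: it supposes the residual $d_i^w - (N\mathbf{x}^w)_i$ is strictly positive at some node $i$, uses complementarity to conclude $y_i^w = 0$, and then splits on the sign of $d_i^w$; for $d_i^w \leq 0$ the hypothesis $\mathbf{c}(\mathbf{x}) > 0$ forces a contradiction through the tight cost condition on an outgoing arc, and for the destination node it traces flow through intermediate nodes via three sub-cases to show the inflow must meet the demand. Your argument instead aggregates: because the formulation retains the destination row of $N$, every column of $N$ contains exactly one $+1$ and one $-1$, so $\mathbf{1}^T N = \mathbf{0}^T$, and because the positive entry of $\mathbf{d}^w$ equals the absolute sum of its negative entries, $\mathbf{1}^T \mathbf{d}^w = 0$; hence the componentwise-nonnegative residual $\mathbf{d}^w - N\mathbf{x}^w$ has zero sum and must vanish. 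This is shorter, dispenses with the delicate destination sub-cases entirely, and---notably---uses neither the cost-side complementarity condition nor the positivity of $\mathbf{c}$, so it reveals that under the paper's convention the conclusion already follows from feasibility of the demand-side condition plus the balance of $\mathbf{d}^w$. The paper's local argument is what one would need if the destination row were deleted (as in the original setting of \cite{ban2005quasi}), where $\mathbf{1}^T N \neq \mathbf{0}^T$ and the aggregate identity is unavailable; your closing paragraphs correctly identify the destination as the one node the local reasoning cannot certify and correctly observe that the global balance identity is exactly what closes that gap.
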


\begin{proof}
This proof is adapted from a proof seen in \cite{ban2005quasi}.  Assume that the $\mathbf{c}(\mathbf{x})$ function is greater than 0 for all $\mathbf{x} \geq 0$.  Also assume for the sake of contradiction that 
\begin{equation}\label{proof-demand:equation_1}
0 < d_i^w - \left(\sum\limits_{l:(l,i)}x_{(l,i)}^w - \sum\limits_{j:(i,j)} x_{(i,j)}^w \right) 
\end{equation} 

\noindent for some destination $w \in \mathcal{W}$ and for some $i \in \mathcal{N}$.  $\sum\limits_{l:(l,i)}x_{(l,i)}^w$ represents the inflow at node $i$, and $\sum\limits_{j:(i,j)} x_{(i,j)}^w$ represents the outflow at node $i$.  We know $y_i^w = 0$ by complementarity in (\ref{TEP:demand_appendix}).  We can rearrange the inequality in (\ref{proof-demand:equation_1}) to say:

\begin{equation}\label{proof-demand:equation_2}
0 \leq \sum\limits_{l:(l,i)}x_{(l,i)}^w < d_i^w +  \sum\limits_{j:(i,j)} x_{(i,j)}^w  
\end{equation} 

\noindent for some $w \in \mathcal{W}$ (representing the final destination) and for some $i \in \mathcal{N}$. We have that the $\sum\limits_{l:(l,i)}x_{(l,i)}^w$ term is greater than or equal to 0 because we know all $\mathbf{x}^w \geq 0$.  The inequality in (\ref{proof-demand:equation_2}) produces three difference cases:
\begin{itemize}
    \item Case 1: $d_i^w = 0$.  This means at least one link in the $\sum\limits_{j:(i,j)} x_{(i,j)}^w$ sum must be positive because $0 < d_i^w +  \sum\limits_{j:(i,j)} x_{(i,j)}^w$.  Therefore, for such a link $(i,j)$, $x_{(i,j)}^w > 0$ forces the following equality:
    \begin{equation}
    \mathbf{c}_{(i,j)}(\mathbf{x}) + y_j^w - y_i^w = 0
    \end{equation}
    
    \noindent We know $y_i^w = 0$, so we have $\mathbf{c}_{(i,j)}(\mathbf{x}) + y_j^w = 0$.  Since $y_j^w \geq 0$, both components must be zero but that contradicts the assumption that $\mathbf{c}(\mathbf{x})$ is greater than 0 for all $\mathbf{x} \geq 0$, thereby contradicting (\ref{proof-demand:equation_1}).
    
    \item Case 2: $d_i^w$ is negative.  This means at least one $x^w_{(i,j)} > 0$ and contradiction follows as in Case 1.
    
    \item Case 3: $d_i^w$ is greater than 0.  This implies that $i=w$ because only the final destination has a positive value.  There are some sub-cases to this case, but we first note that we know for a general node $k \neq i$:
    \begin{equation}\label{knowledge_gained}
        \sum\limits_{l:(l,k)}x_{(l,k)}^w - \sum\limits_{j:(k,j)} x_{(k,j)}^w = d_k^w 
    \end{equation}
    
    \noindent for the cases of $d_k^w = 0$ or when $d_k^w$ is negative, based on Cases 1 and 2.
    
    \begin{itemize}
        \item Sub-Case A: $\sum\limits_{j:(i,j)} x_{(i,j)}^w > 0$.  In this case, we arrive at the same contradictions we arrived at for the previous two cases.
        
        \item Sub-Case B: $\sum\limits_{j:(i,j)} x_{(i,j)}^w = 0$ and $\sum\limits_{l:(l,i)}x_{(l,i)}^w = 0$. In this case, we know there is some $d_k^w$ that is negative, which by (\ref{knowledge_gained}) means $\sum\limits_{j:(k,j)} x_{(k,j)}^w > 0$.  For any connecting nodes $q$ between node $k$ and node $i$, $\sum\limits_{l:(l,q)}x_{(l,q)}^w = \sum\limits_{j:(q,j)} x_{(q,j)}^w > 0$.  Therefore, for node $i$, the inflow sum $\sum\limits_{l:(l,i)}x_{(l,i)}^w$ must be greater than 0, contradicting our assumption. Overall, we arrive at the contradiction because flow begins at a node, which produces outflow to neighboring nodes, and this in turn produces inflow at node $i$.
        
        \item Sub-Case C: $\sum\limits_{j:(i,j)} x_{(i,j)}^w = 0$ and $\sum\limits_{l:(l,i)}x_{(l,i)}^w > 0$ but less than $d_i^w$.  For any $k$ nodes in which $d_k^w$ is negative, we know the absolute sum over these $k$ nodes is equal to $d_i^w$ when $i=w$.  Therefore, as in Sub-Case B, there are outflows at these $k$ nodes due to the relationship (\ref{knowledge_gained}).  There is also conservation of flow at the $q$ nodes in which $d_q^w = 0$.  Therefore, for the $q$ nodes connected to node $i$, the outflow from those nodes must match the inflow from previous nodes, which if taken back to the $k$ nodes, would equal the total $d_i^w$ sum.  Therefore, for node $i$, the inflow sum $\sum\limits_{l:(l,i)}x_{(l,i)}^w$ must be equal to $d_i^w$.  This contradicts our assumption that the inflow would be less than $d_i^w$.  Overall, we arrive at the contradiction because flow would be pushed toward the $i$ destination in order to satisfy the relationships established by (\ref{knowledge_gained}).
    \end{itemize}
    
\end{itemize}

\noindent Consequently, we have shown that a solution to the traffic equilibrium problem will result in the $\mathbf{d}^w - N \mathbf{x}^w = 0$ if we assume the $\mathbf{c}(\mathbf{x})$ function is greater than 0 for all $\mathbf{x} \geq 0$.
\end{proof}

\subsection{Explanation of Forming the Inverse Model from \cite{bertsimas2015data}}

To form the inverse optimization mathematical program from \cite{bertsimas2015data} for our traffic equilibrium problem, we return to the VI formulation of the problem and notice the structure (\ref{eqn:begin_min_problem_appendix})
\begin{subequations}
\begin{equation}\label{eqn:begin_min_problem_appendix}
    \mathbf{c}(\mathbf{x}^*)^T \mathbf{x} \geq \mathbf{c}(\mathbf{x}^*)^T \mathbf{x}^* - \epsilon,\ \forall \mathbf{x} \in \mathcal{F}
\end{equation}
\begin{equation}
    \mathcal{F} = \left\{ \mathbf{x} : \mathbf{x} \in \mathbb{R}^{|\mathcal{A}|}_+\  s.t.\ N\mathbf{x} \leq  \mathbf{d} \right\}
\end{equation}
\end{subequations}

\noindent Note, the $\mathcal{F}$ set is written slightly differently here than how it was initially introduced in equation (2) in Section 3.1 in order to mirror the complementarity problem (\ref{traffic_equilibrium_problem_appendix}) and in order to represent the fact that we only are working with one destination at a time (hence we do not need the $w$ index).  We notice that we can turn the left hand side of the (\ref{eqn:begin_min_problem_appendix}) inequality into a minimization problem
\begin{equation}\label{LP_problem_appendix}
    \min\limits_{\mathbf{x}\in \mathcal{F}} \mathbf{c}(\mathbf{x}^*)^T \mathbf{x}
\end{equation}
in which $\mathbf{x}^*$ is fixed, and this minimization problem forms the tightest upper bound on the right hand side of (\ref{eqn:begin_min_problem_appendix}) since we are choosing the $\mathbf{x}$ to minimize the left hand side of (\ref{eqn:begin_min_problem_appendix}) [\cite{facchinei2007finite,bertsimas2015data}].  Because (\ref{LP_problem_appendix}) is a linear program, we know strong duality holds, which means   
we can find the dual of this problem and know that there is no duality gap between the primal and the dual [\cite{murty1983linear,enme741_textbook}]. The dual of this problem is:
\begin{subequations}
\begin{equation}
    \max\limits_{\mathbf{y}^w} (-\mathbf{d}^w)^T \mathbf{y}^w
\end{equation}
\begin{equation}
    -N^T \mathbf{y}^w \leq \mathbf{c}(\mathbf{x}^*)
\end{equation}
\begin{equation}
    \mathbf{y}^w \geq 0
\end{equation}
\end{subequations}

\noindent As in \cite{bertsimas2015data}, we then equate the dual objective and the primal objective, and our final set of constraints representing the satisfaction of the variational inequality in (\ref{eqn:begin_min_problem_appendix}) are:
\begin{subequations}
\begin{equation}\label{dual_primal_appendix}
    \mathbf{c}(\mathbf{x}^*)^T \mathbf{x}^* + (\mathbf{d}^w)^T \mathbf{y} \leq \epsilon
\end{equation}
\begin{equation}\label{dual_feas_1_appendix}
    -N^T \mathbf{y}^w \leq \mathbf{c}(\mathbf{x}^*)
\end{equation}
\begin{equation}\label{dual_feas_2_appendix}
    \mathbf{y}^w \geq 0
\end{equation}
\end{subequations}

\noindent which include the equating of the dual and primal objectives (\ref{dual_primal_appendix}) as well as the dual feasibility constraints (\ref{dual_feas_1_appendix}-\ref{dual_feas_2_appendix}). Using these conditions, we can then form an optimization problem including each data point $\hat{\mathbf{x}}^j$ representing the flow on the network such that: 

\begin{itemize}
\item There is one OD pair for each instance $\hat{\mathbf{x}}^j$. 

\item There is the same node-arc incidence matrix $N$ for each $\hat{\mathbf{x}}^j$.
\end{itemize}

\noindent The optimization problem becomes for $J$ data points, parameters $\theta \in \Theta$ with $\Theta$ as a convex subset of $\mathbb{R}^{Z}$ ($Z$ representing a number of parameters), $\mathbf{y}^j \in \mathbb{R}^{|\mathcal{N}|}$, and $\epsilon \in \mathbb{R}^{J}$:  

\begin{subequations}\label{basic_bertsimas_formulation_appendix}
\begin{equation}
    \min\limits_{\theta \in \Theta,\mathbf{y},\epsilon} ||\epsilon||^2
\end{equation}
\begin{equation}
    -(N)^T \mathbf{y}^j \leq \mathbf{c}(\hat{\mathbf{x}}^j;\theta),\ j=1,...,J,
\end{equation}
\begin{equation}
    \mathbf{y}^j \geq 0,\ j = 1,...,J,
\end{equation}
\begin{equation}
    \mathbf{c}(\hat{\mathbf{x}}^j;\theta)^T \hat{\mathbf{x}}^j + (\mathbf{d}^j)^T \mathbf{y}^j \leq \epsilon^j,\ j=1,...,J,
\end{equation}
\end{subequations}

\noindent The set $\Theta$ is determined by the lower and upper bounds on the parameter values found in Table 1.

\section{Normative Model: Two-Stage Stochastic Model}

\subsection{Two-Stage Stochastic Model: Parameter Details}

We adopt the notation from \cite{fan2010solving}, with the exception of the parameters in the $h_{a}^s(u_a)$ function which, although inspired by \cite{fan2010solving}, is of a different form:

\begin{itemize}
    \item $\mathcal{A}$: the set of network arcs, and $m$ as the number of arcs.
    
    \item $\mathcal{N}$: the set of network nodes, and $n$ as the number of nodes.
    
    \item $K$: the number of destinations of flow in the network. 
    
    \item $\mathcal{S}$: the scenario set
    
    \item $x_a^{k,s}$: the flow on arc $a$ that is destined for the $k$th destination in scenario $s$.  The vector $\mathbf{x}^{k,s} \in \mathbb{R}^m$ denotes the flow on all arcs that is headed for the $k$th destination in scenario $s$. (units=thousands of vehicles)
    
    \item $f_{a}^s$: the total flow on arc $a$ in scenario $s$, and $\mathbf{f}^s$ as the vector containing all of the $f_a^s$ decision variables for scenario $s$. (units=thousands of vehicles)
    
    \item $u_a$: the decision variable controlling resources used to protect an arc $a$ against a crisis.  
    Some examples of potential protection decisions include protective measures against landslides and flash floods as in the case of Nepal [\cite{WFP_link}]. (units=proportion of necessary resources needed to fully insure the arc) 
    
    \item $W$: the node-link adjacency matrix. We use the definition from \cite{marcotte2007traffic}'s work of this matrix which states $W \in \{-1,0,1 \}^{|N|\times|A|}$ such that, for a given column (representing an arc), there is a -1 at the node in which the arc begins and a 1 at the node in which the arc ends. 
    
    
    \item $\mathbf{q}^k \in \mathbb{R}^n$: designates the amount of flow originating at each node that is headed to destination $k$.  We based our construction of the $\mathbf{q}^k$ vectors upon the set-up from \cite{marcotte2007traffic}'s such that negative entries within the vector indicate the presence of and amount of demand at those nodes and such that a single positive entry denotes location of the demand (and is the absolute sum of the negative entries).  
    (units = thousands of vehicles)
    
    \item $h_{a}^s(u_a)$: the capacity of an arc $a$ given first stage decision $u_a$ under scenario $s$:
    \begin{equation}
        h_a^s(u_a) = \begin{cases} \text{cap}_a\ \text{ if } a \notin \bar{\mathcal{A}} \\ \text{cap}_a - m_a^s (1-u_a)\ \text{ if } a \in \bar{\mathcal{A}} \end{cases}
    \end{equation}
    
    \noindent with $\text{cap}_a$ representing capacity of the arc without it being affected by a disaster, $m_a^s$ representing the amount of damage done to arc $a$ in scenario $s$ if not protected, and $\bar{\mathcal{A}}$ represents the set of arc vulnerable to the disaster.  Note that $m_a^s$ could be 0 in certain scenarios.  (units = thousands of vehicles)
    

    \item $t_a(\mathbf{f}^s)$ represents the time per vehicle along arc $a$ \cite{lu2018mean,lu2016bi} as a function of the flows $\mathbf{f}^s$ in scenario $s$.  We explore multiple different forms for $t_a$.   
    
    \item $\lambda_i^{k,s}$ as ``the minimum time from node $i$ to destination $k$'' in scenario $s$ according to \cite{fan2010solving}. (units=travel time)
    
    \item $\mathbf{d}^{k,s}$ as the vector of extra variables that acts as a buffer for any flow that cannot be properly apportioned. (units=thousands of vehicles).
    
    \item $p^s$ as the probability of each scenario $s$
    
    
\end{itemize}

\subsection{Calculating the $M_{i,j}^{k,s}$ Values}

The $M_{i,j}^{k,s}$ values are the numbers utilized in the disjunctive constraints in Section 3.2.1. To calculate the $M_{i,j}^{k,s}$ values, we use the following reasoning.  First, we know that the maximum flow on a given arc is 8.  We also know from Table 1 that the maximum value of $\phi_a$ and $\beta_a$ is 10.  Therefore, we input $x_a=8$ into $\phi_a x_a + \beta_a$, obtain 90, and then multiply by the number of arcs to obtain an upper bound, which can be increased if desired.  We decide to increase it by multiplying by 2.  The resulting value represents an upper bound on the maximum travel time between an origin and destination point in the networks under the linear cost function.  It also works for the BPR cost function because if we take the maximum value of that function for a given arc, we would get 12, which is significantly below 90.  The final value of $M_{i,j}^{k,s}$ is $90(m)(2)$, with $m$ as the number of arcs.

\section{Results}

\subsection{Flow Error under IO $\alpha$}

\cite{allen2021using} define a flow error metric to evaluate whether or not an IO parameterization is valid for their application.  Since the $\alpha$ values (for the BPR functions) imputed through IO are different from the original $\alpha$ values, we evaluate the flow error for each network used.  This flow error metric is the Frobenius norm between the flow values across all the arcs for all of the OD pairs in a given trial.  The flow error metrics for the two networks can be seen in Table \ref{flow_error} below:

\begin{table}[H]
    \centering
    \begin{tabular}{c|c}
        \textbf{4x4 Grid (Experiment II)} & \textbf{Nguyen \& Dupuis Network (Experiment IV)} \\ \hline
        0.0002 & 1.57e-05\\
        0.0001 & 2.93e-05 \\
        0.0002 & 7.11e-05 \\
        0.0002 & 3.08e-05\\
        5.73e-05 & 8.62e-05\\
        0.0007 & 0.0004\\
        0.0001 & 4.02e-05\\
        6.39e-05 & 4.43e-05 \\
        0.0001 & 2.26e-05 \\
        7.82e-05 & 6.06e-05 \\
    \end{tabular}
    \caption{Flow Errors for BPR Functions on the Two Networks}
    \label{flow_error}
\end{table}

\noindent From the small magnitude of these values, we see that the $\alpha$ recovered by the IO model produce flow values that are very close to the flow values produced by the original $\alpha$ values.

\subsection{Median/Min-Max Tables and Nguyen \& Dupuis Boxplots}

When examining the medians as a percentage of the budget for Experiments I-III in Tables \ref{table:medians_ranges_epsilon_0_01} and \ref{table:medians_ranges_epsilon_0_001}, the O-IO metric medians are quire small compared to the U-IO \& U-O metric medians, thus again supporting the claim that IO can be used to recover the original cost protection decisions and that the protection decisions made under IO and original costs are different from the protection decisions made under uniform cost.  

Looking at Figure \ref{fig:experiment_3_metrics}, the metric data are not overlapping which supports the idea that the protection decisions under IO or original costs differ when compared to the protection decisions under uniform or baseline cost parameters.  In Figure \ref{fig:experiment_4_metrics}, we see that the decisions under uniform cost do not differ from the IO imputed cost decisions in Experiment IV as much as in other experiments.  However, this could be a result of the small interval in which $\alpha$ was allowed to vary.  In future work, it would be interesting to experiment with wider intervals to further understand this behavior.  At the same time, these results do not take away from our conclusion that IO is able to impute costs that lead to protection decisions similar to those of the original cost. 

\begin{table}[H]
\centering
\begin{footnotesize}
\begin{tabular}{c|cc|cc|cc|cc}
     & \multicolumn{2}{c}{\textbf{Experiment I}} & \multicolumn{2}{c}{\textbf{Experiment II}} & \multicolumn{2}{c}{\textbf{Experiment III}} & \multicolumn{2}{c}{\textbf{Experiment IV}} \\ \cline{2-9}
     & Med & (Min, Max) & Med & (Min, Max) & Med & (Min, Max) & Med & (Min, Max) \\ \hline
     \textbf{O-IO}& \makecell{0.0004\\0.01\%} &(0.0, 0.0765)& \makecell{0.0006\\0.01\%} &(0.0, 0.0076)& \makecell{0.0145\\0.24\%} &(0.0, 0.0799)& \makecell{0.0014\\0.02\%} &(0.0, 0.0181)\\ \hline \textbf{U-IO}& \makecell{0.347\\5.78\%} &(0.2213, 0.4441)& \makecell{0.0446\\0.74\%} &(0.0108, 0.1641)& \makecell{0.2877\\4.79\%} &(0.1629, 0.3472)& \makecell{0.0112\\0.19\%} &(0.0, 0.1819)\\ \hline \textbf{U-O}& \makecell{0.347\\5.78\%} &(0.2213, 0.4438)& \makecell{0.0447\\0.74\%} &(0.0083, 0.164)& \makecell{0.2891\\4.82\%} &(0.1502, 0.3472)& \makecell{0.004\\0.07\%} &(0.0, 0.1819)
\end{tabular}
\end{footnotesize}
\caption{Medians, Medians as Percentage of $I=6$ Budget, and Ranges for Experiments, $\epsilon=0.01$} \label{table:medians_ranges_epsilon_0_01}
\end{table}

\begin{table}[H]
\centering
\begin{footnotesize}
\begin{tabular}{c|cc|cc|cc|cc}
     & \multicolumn{2}{c}{\textbf{Experiment I}} & \multicolumn{2}{c}{\textbf{Experiment II}} & \multicolumn{2}{c}{\textbf{Experiment III}} & \multicolumn{2}{c}{\textbf{Experiment IV}} \\ \cline{2-9}
     & Med & (Min, Max) & Med & (Min, Max) & Med & (Min, Max) & Med & (Min, Max) \\ \hline
     \textbf{O-IO}& \makecell{0.0005\\0.01\%} &(0.0, 0.0765)& \makecell{0.0006\\0.01\%} &(0.0, 0.0041)& \makecell{0.0009\\0.02\%} &(0.0, 0.1323)& \makecell{0.001\\0.02\%} &(0.0, 0.0941)\\ \hline \textbf{U-IO}& \makecell{0.3147\\5.24\%} &(0.2189, 0.4747)& \makecell{0.0472\\0.79\%} &(0.0092, 0.2037)& \makecell{0.2781\\4.64\%} &(0.1629, 0.3578)& \makecell{0.0095\\0.16\%} &(0.0, 0.1209)\\ \hline \textbf{U-O}& \makecell{0.3163\\5.27\%} &(0.2189, 0.4744)& \makecell{0.0472\\0.79\%} &(0.007, 0.2032)& \makecell{0.2782\\4.64\%} &(0.1502, 0.3578)& \makecell{0.0075\\0.13\%} &(0.0, 0.1251) \\
\end{tabular}
\end{footnotesize}
\caption{Medians, Medians as Percentage of $I=6$ Budget, and Ranges for Experiments, $\epsilon=0.001$} \label{table:medians_ranges_epsilon_0_001}
\end{table}

\begin{figure}[H]
\begin{subfigure}{0.5\textwidth}
\centering
\includegraphics[height=0.2\textheight,keepaspectratio]{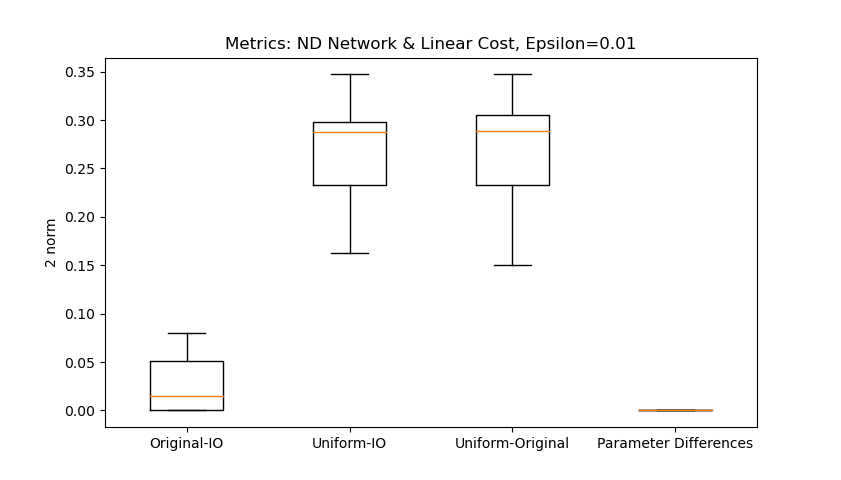}
\caption{Experiment III Results for $\epsilon=0.01$}\label{fig:experiment_3_0_01}
\end{subfigure}
\begin{subfigure}{0.5\textwidth}
\centering
\includegraphics[height=0.2\textheight,keepaspectratio]{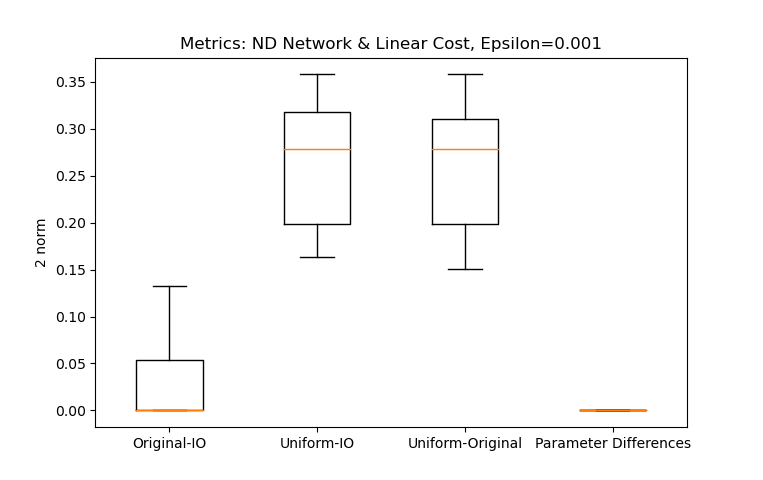}
\caption{Experiment III Results for $\epsilon=0.001$}\label{fig:experiment_3_0_001}
\end{subfigure}
\caption{Experiment III Results: Nguyen \& Dupuis Network with Linear Cost.  The parameter differences refer to the $\phi$ differences.}
\label{fig:experiment_3_metrics}
\end{figure}

\begin{figure}[H]
\begin{subfigure}{0.5\textwidth}
\centering
\includegraphics[height=0.2\textheight,keepaspectratio]{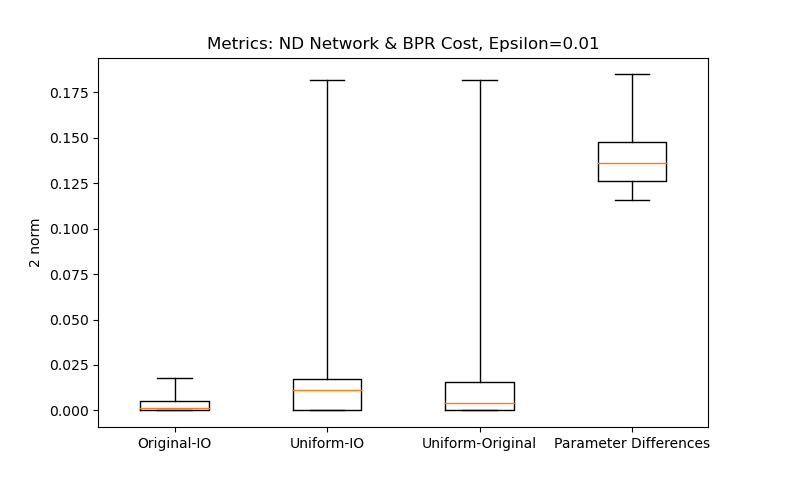}
\caption{Experiment IV Results for $\epsilon=0.01$}\label{fig:experiment_4_0_01}
\end{subfigure}
\begin{subfigure}{0.5\textwidth}
\centering
\includegraphics[height=0.2\textheight,keepaspectratio]{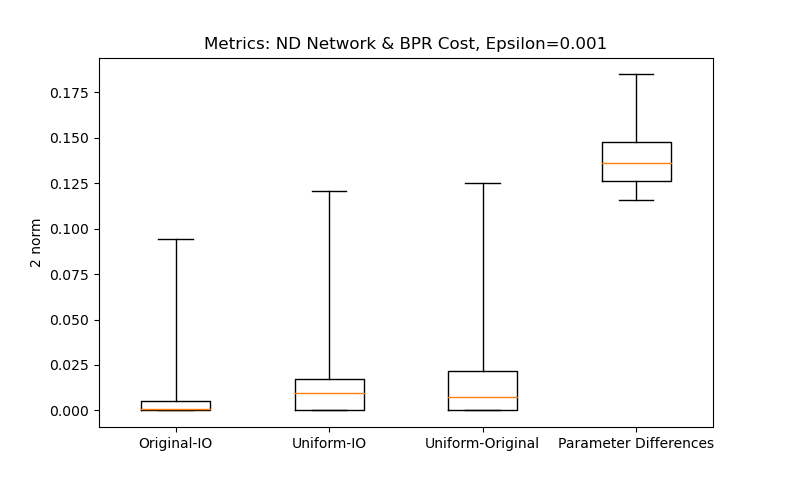}
\caption{Experiment IV Results for $\epsilon=0.001$}\label{fig:experiment_4_0_001}
\end{subfigure}
\caption{Experiment IV Results: Nguyen \& Dupuis Network with BPR.  The parameter differences refer to the $\alpha$ differences.}
\label{fig:experiment_4_metrics}
\end{figure}

\subsection{Run Time Results}

The following box-plots illustrate the run time data for Experiments I-IV and for both values of $\epsilon$, which is the value of the $g^{(k)}$ error metric in which the iterations could stop (or if 300 iterations occurred).  As a reminder, all of the experiments were run on an 8 core machine.  

For $\epsilon=0.01$, most of the trials of the experiments were below 200 minutes and, for $\epsilon=0.001$, most of the trials of the experiments were below 500 minutes.  It can be seen that there were some outliers for Experiment II, which was likely due to the additional variables needed to estimate the BPR function and the larger graph size.  

\begin{figure}[H]
\begin{subfigure}{0.5\textwidth}
\centering
\includegraphics[height=0.2\textheight,keepaspectratio]{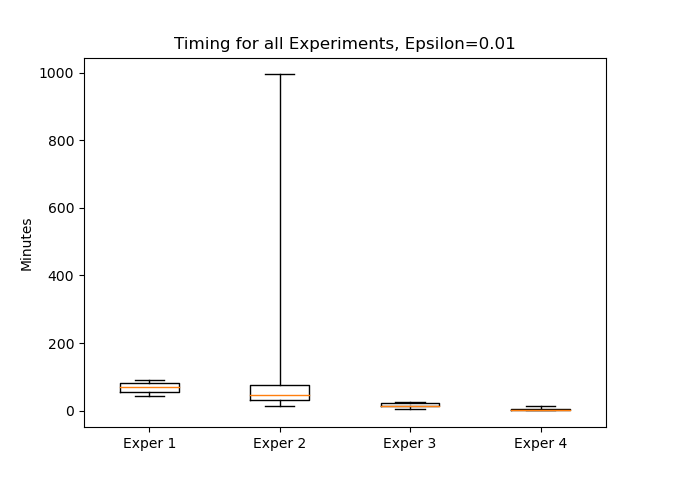}
\caption{Experiment Timing for $\epsilon=0.01$}\label{fig:timing_0_01}
\end{subfigure}
\begin{subfigure}{0.5\textwidth}
\centering
\includegraphics[height=0.2\textheight,keepaspectratio]{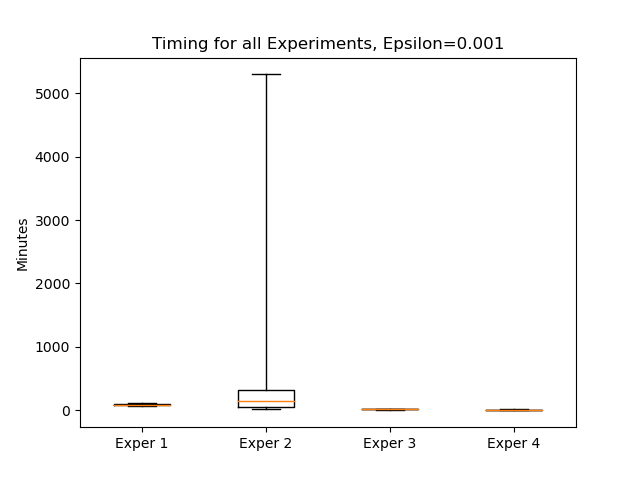}
\caption{Experiment Timing for $\epsilon=0.001$}\label{fig:timing_0_001}
\end{subfigure}
\caption{Experiment Timing Results (Minutes)}
\label{fig:timing_experiments}
\end{figure}

\section{Code Attribution}

Below are the various code resources, packages, etc. that we utilized over the course of the project:

\begin{itemize}
    \item Python (Version 3.8.5) Package
    \begin{itemize}
        \item \url{pyomo} 5.7.1 \cite{hart2011pyomo,hart2017mathematical}
        \item \url{pysp} 5.7.1 \cite{watson2012pysp}
        \item \url{networkx} 2.5 \cite{schult2008exploring}
        \item \url{pandas} 1.1.3  \cite{mckinney2010data}
        \item \url{numpy} 1.19.2 \cite{numpy_citation,numpy_citation_2,2020SciPy-NMeth}
        \item \url{scipy} 1.5.2 \cite{2020SciPy-NMeth}
        \item \url{matplotlib} 3.3.2 \cite{Hunter:2007}
    \end{itemize}
    \item Solvers
    \begin{itemize}
        \item \url{gurobi} Version 9.1.1 \cite{gurobi_citation}
        \item \url{ipopt} \cite{wachter2006implementation}
    \end{itemize}
    \item MATLAB 9.8.0.1417392 (R2020a) Update 4 \cite{MATLAB:2020a} 
    \item GAMS \cite{GAMS_software_34_1} with PATH solver \cite{dirkse1995path,ferris2000complementarity} (PATH website \cite{path_website})
    \item Wolfram Alpha \cite{wolfram_alpha}
    \item Data:
    \begin{itemize}
        \item Nguyen \& Dupuis Network \cite{nguyen1984efficient} 
    \end{itemize}
    \item Important Sites with Example Code for \url{pysp} Implementation in Scripts:
    \begin{itemize}
        \item \url{https://projects.coin-or.org/Pyomo/browser/pyomo/trunk/examples/pysp/farmer/concrete/ReferenceModel.py?rev=9358}
        
        \item \url{https://github.com/Pyomo/pysp/blob/master/examples/farmer/concreteNetX/ReferenceModel.py}
        
        \item \url{https://pyomo.readthedocs.io/en/stable/advanced_topics/pysp_rapper/demorapper.html#ph}
    \end{itemize}
    
\end{itemize}


\end{document}